\let\bfseries=\undefined
\DeclareRobustCommand\bfseries
\newcommand{\Q}{{\mathbb Q}}
\newcommand{\Z}{{\mathbb Z}}
\newcommand{\R}{\mathbb R}
\DeclareMathOperator{\diam}{diam}
\DeclareMathOperator{\dist}{dist}
\def\Circuits{{\mathcal C}}
\def\ve#1{\mathchoice{\mbox{\boldmath$\displaystyle\bf#1$}}
{\mbox{\boldmath$\textstyle\bf#1$}}
{\mbox{\boldmath$\scriptstyle\bf#1$}}
{\mbox{\boldmath$\scriptscriptstyle\bf#1$}}}
\newcommand\veb{{\ve b}}
\newcommand\vecc{{\ve c}}
\newcommand\veg{{\ve g}}
\newcommand\veu{{\ve u}}
\newcommand\vev{{\ve v}}
\newcommand\vew{{\ve w}}
\newcommand\vex{{\ve x}}
\newcommand\vey{{\ve y}}
\newcommand\vez{{\ve z}}
\newcommand{\eoproof}{\hspace*{\fill} $\square$ \vspace{5pt}}
\newcommand{\cupdot}{\mathbin{\dot{\cup}}}
\newcommand{\DeclareBracket}[3]{
  \newcommand{#1}[2][]{%
  \ifthenelse%
  {\equal{##1}{}}%
  {\left#2##2\right#3}%
  {\csname ##1l\endcsname#2##2\csname ##1r\endcsname#3}}}
\begin{document}

\title{Quadratic diameter bounds for dual network flow polyhedra}

\author{Steffen Borgwardt \and Elisabeth Finhold \and Raymond Hemmecke}

\institute{S. Borgwardt, Technische Universit\"at M\"unchen, +49-89-28916876, \email{\href{mailto:borgwardt@ma.tum.de}{borgwardt@ma.tum.de}}
\and E. Finhold, Technische Universit\"at M\"unchen, +49-89-28916891, \email{\href{mailto:finhold@tum.de}{finhold@tum.de}}
\and R. Hemmecke, Technische Universit\"at M\"unchen, +49-89-28916864, \email{\href{mailto:hemmecke@tum.de}{hemmecke@tum.de}}}

\date{\today}

\maketitle

\begin{abstract}
Both the combinatorial and the circuit diameters of polyhedra are of interest to the theory of linear programming for their intimate connection to a best-case performance of linear programming algorithms. 

We study the diameters of dual network flow polyhedra associated to $b$-flows on directed graphs $G=(V,E)$ and prove quadratic upper bounds for both of them: the minimum of $(|V|-1)\cdot |E|$ and $\frac{1}{6}|V|^3$ for the combinatorial diameter, and $\frac{|V|\cdot (|V|-1)}{2}$ for the circuit diameter. The latter strengthens the cubic bound implied by a result in [De Loera, Hemmecke, Lee; 2014].

Previously, bounds on these diameters have only been known for bipartite graphs. The situation is much more involved for general graphs. In particular, we construct a family of dual network flow polyhedra with members that violate the circuit diameter bound for bipartite graphs by an arbitrary additive constant. Further, it provides examples of circuit diameter $\frac{4}{3}|V| - 4$.
\end{abstract}

{\bf{Keywords}:} combinatorial diameter, circuit diameter, Hirsch Conjecture, edges, circuits, Graver basis, linear program, integer program


\section{Introduction}
In the context of a best-case performance of the Simplex algorithm, the studies of the combinatorial diameter of polyhedra are a classical field in the theory of linear programming. In particular, if one can find an $n$-dimensional polyhedron with $f$ facets with a diameter that is exponential in $f$ and $n$, then the existence of a polynomial pivot rule for the Simplex algorithm would be disproved.

In 1957, Hirsch stated the famous conjecture \cite{d-63} claiming that the combinatorial diameter of a polyhedron is at most  $f-n$.  For (unbounded) polyhedra there are low-dimensional counterexamples \cite{klee67}. For polytopes however, the Hirsch conjecture stood for more than 50 years, until Santos gave a first counterexample \cite{santos11}.
 Nonetheless the bound holds for several well-known families of polyhedra, like $0\slash 1$-polytopes \cite{n-89} or dual transportation polyhedra \cite{balinski84}. However, it is still unsolved for many classes of polyhedra, e.g. primal transportation polytopes; see \cite{kd-13}. Even the polynomial Hirsch conjecture that asks whether there is an upper bound on the combinatorial diameter of general polytopes that is polynomial in $f$ and $n$ is open. See the survey by Kim and Santos for the current state-of-the-art \cite{ks-10}. 

For our discussion, we use the following notation. Let $\vev^{(1)}$ and $\vev^{(2)}$ be two vertices of a polyhedron $P$. We call a sequence of vertices $\vev^{(1)}=\vey^{(0)},\ldots,\vey^{(k)}=\vev^{(2)}$ an \emph{edge walk of length $k$} if every pair of consecutive vertices is connected by an edge. The \emph{(combinatorial) distance} of $\vev^{(1)}$ and $\vev^{(2)}$ is the minimum length of an edge walk between $\vev^{(1)}$ and $\vev^{(2)}$. The \emph{combinatorial diameter} $\diam(P)$ of $P$ then is the maximum distance between any two vertices of $P$.

On such edge walks we only go along edges of the polyhedron $P$, in particular we never leave its boundary. In contrast to this, \emph{circuit walks} also use only 'potential' edge directions, but may walk through the interior of the polyhedron: Let a polyhedron $P$ be given by 
	\[	
		P= \set{\,
			\vez\in \R^n:\ A^1\vez=\veb^1, A^2\vez\geq \veb^2
		\,}
	\]
for matrices $A^i\in\Q^{d_i\times n}$ and vectors $\veb^i\in \R^{d_i}$, $i=1,2$.
The \emph{circuits} or \emph{elementary vectors} $\Circuits(A^1,A^2)$ of $A^1$ and $A^2$ are those vectors $\veg \in \ker(A^1)\setminus\set{\,\ve 0\,}$, for which $A^2\veg$ is support-minimal in the set $\left\{A^2\vex: \ \vex \in \ker\left(A^1\right)\backslash \{0\}\right\}$, where $\veg$ is normalized to coprime integer components. It can be shown that the set of circuits consists exactly of all edge directions of $P$ for varying $\veb^1$ and $\veb^2$ \cite{Graver:75}. Circuits and their integer programming equivalents, Graver bases, play an important role in the theory of integer programming. We refer the reader to the book \cite{DHKbook} for a thorough introduction to the topic.

The circuit analogues to the notions of combinatorial distance and diameter for $P$ are then defined as follows \cite{CircuitDiam}: For two vertices $\vev^{(1)},\vev^{(2)}$ of $P$, we call a sequence $\vev^{(1)}=\vey^{(0)},\ldots,\vey^{(k)}=\vev^{(2)}$ a \emph{circuit walk of length $k$} if for all $i=0,\ldots,k-1$ we have
\begin{enumerate}
\item $\vey^{(i)}\in P$,
\item $\vey^{(i+1)}-\vey^{(i)}=\alpha_i\veg^{(i)}$ for some $\veg^{(i)}\in\Circuits(A^1,A^2)$  and $\alpha_i>0$, and
\item $\vey^{(i)}+\alpha\veg^{(i)}$ is infeasible for all $\alpha>\alpha_i$. 
\end{enumerate}  
 The \emph{circuit distance} $\dist_\Circuits(\vev^{(1)},\vev^{(2)})$ from $\vev^{(1)}$ to $\vev^{(2)}$ then is the minimum length of a circuit walk from $\vev^{(1)}$ to $\vev^{(2)}$. The \emph{circuit diameter} $\diam_\Circuits(P)$ of $P$ is the maximum circuit distance between any two vertices of $P$.

Clearly, the circuit diameter of a polyhedron is at most as large as the combinatorial diameter of the polyhedron, as a walk along the $1$-skeleton/edges of the polyhedron is a special circuit walk. Once again, if there exists a polyhedron with exponential circuit diameter, there can be no polynomial pivot rule for the Simplex algorithm.  This is one of several reasons to study it in the context of linear programming; see \cite{CircuitDiam}. In fact, the circuit diameter gives a lower bound for any augmentation algorithm along circuit directions \cite{DHKbook}.

In fact, it is open whether there is a polyhedron with a circuit diameter that exceeds $f-n$, as in the Hirsch conjecture (see Conjecture $1$ in \cite{CircuitDiam}). The polyhedra giving counterexamples to the Hirsch conjecture do not violate this bound for the circuit diameter. This raises the natural question how these two diameters are related to one another. In this paper, we study the diameters for the family of \emph{dual network flow polyhedra}, for which we prove quadratic upper bounds on both the combinatorial diameter and the circuit diameter.

Let $G=(V,E)$ be a directed connected graph and let $A\in \set{\, -1,0,1\, }^{|V|\times|E|}$ be its node-arc incidence matrix, where $a_{ie}=-1$ and $a_{je}=1$ if arc $e$ has node $i$ as its tail and node $j$ as its head. Let $\veb\in \R^{|V|}$. A $\veb$-flow on $G$ is given by any solution to $A\vex=\veb,\ \vex\geq \ve0$, that is, in each node $i\in V$ the resulting flow (incoming minus outgoing flow) is given by $b_i$. For some cost function $\vecc\colon E\to \R_+$, the min-cost $\veb$-flow problem and its dual are  given by 
\[
	\min \set{\, \vecc^T\vex: \ A\vex=\veb,\ \vex\geq \ve0\,} \quad \text{and} \quad \max \set{\, \veu^T\veb: \ A^T\veu\leq \vecc\,}.
\]
In the following we are interested in the \emph{dual network flow polyhedron} associated to some graph $G$ and vector $\vecc\in\R^{|E|}$. These polyhedra can be written as
\[
  P_{G,\vecc}=\set{\,\veu\in\R^{|V|}:-u_a+u_b\leq c_{ab}\ \forall\ ab\in E, u_0=0\,}.
\]
As is standard, we set $u_0=0$ to make $P_{G,\vecc}$ pointed (to actually have vertices). Then linear programming over $P_{G,\vecc}$ is a viable approach for solving the corresponding min-cost $\veb$-flow problem and is another reason for the interest in the diameters of this family of polyhedra.

In \cite{balinski84} and \cite{CircuitDiam} the diameters of dual transportation polyhedra were studied. They are associated to undirected bipartite graphs and can be interpreted as dual network flow polyhedra on directed bipartite graphs on node sets $V=V_1\cupdot V_2$, where all edges point from $V_1$ to $V_2$. Hence these diameter results transfer to special cases of our more general setting:

Balinski \cite{balinski84} proved that the combinatorial diameter of a dual transportation polyhedron associated with a complete bipartite graph on $M\times N$ nodes is bounded above by $(M-1)(N-1)$ and that this bound is sharp for all $M,N$. Observe that this bound is quadratic in the number of nodes and linear in the number of edges. The circuit diameter of a dual transportation polyhedron defined on an arbitrary bipartite graph on $M\times N$ nodes is bounded above by $M+N-2=|V|-2$ (\cite{CircuitDiam}) and there are examples having circuit diameter $M+N-3=|V|-3$ for any value of $M+N$.

For general graphs, we cannot expect similar bounds. The following example gives a graph for which the upper bound $|V|-2$ does not hold for the circuit diameter.

\begin{example}\label{THEex}The dual network flow polyhedron $P_{G,\vecc}$ associated with the following graph on $4$ nodes has circuit diameter at least $|V|=4$. (See Section \ref{sec:ex} for a proof.) The edges are labeled with the corresponding values of $\vecc$.
\begin{figure}[H]
\centering  
\begin{tikzpicture}
	\tikzset{vertex/.style = {shape=circle,draw,minimum size=3em}}
  \node[vertex] (v0) at  (0,3) {$v_0$};
  \node[vertex] (v1) at  (0,0) {$v_1$};
  \node[vertex] (v2) at  (3,0) {$v_2$};
  \node[vertex] (v3) at  (3,3) {$v_3$};
  
  \tikzset{edge/.style = {->,> = latex'}}
  \draw[edge, ->] (v3) [bend right = 10] to  node[above] {$0$} (v0);
  \draw[edge, ->] (v2) [bend left = 10] to (v0);
  \node at (0.5,2) {$0$};
  \draw[edge, ->] (v3) [bend left = 10] to (v1);
	\node at (1,0.5) {$0$};
  \draw[edge, ->] (v0) [bend left = 50] to [above] node {$2$} (v3);
  \draw[edge, ->] (v0) [bend left=10] to (v2);
  \node at (2.5,1) {$\frac{4}{3}$};
  \draw[edge, ->] (v1) [bend left=10] to (v3);
  \node at (2,2.5) {$\frac{4}{3}$};
  \draw[edge, ->] (v0) [bend right=10] to [left] node {$1$} (v1);
  \draw[edge, ->] (v1) [bend right=10] to [below] node {$1$} (v2);
  \draw[edge, ->] (v2)
   [bend right=10] to [right] node {$\frac{10}{9}$} (v3);
\end{tikzpicture}
\end{figure}
\eoproof
\end{example}

We extend this graph to a family of graphs with associated polyhedra of circuit diameter greater than $|V|+k-1$ for any $k$. To do so, we introduce what we call a \emph{glueing construction}: If we glue $k$ graphs together at a single, arbitrary node, we obtain a larger graph. The circuit diameter, respectively combinatorial diameter, of this larger graph then is the sum of the circuit diameters, respectively combinatorial diameters, of the polyhedra associated to the smaller graphs; see Lemma \ref{lemmaGlueing} in Section \ref{sec:ex}.

Applying this construction to $k$ copies of Example \ref{THEex} above, we get a family of graphs on $3k+1$ nodes with associated dual network flow polyhedra that admit a circuit diameter of at least $4k=|V|+k-1$. Hence we violate the circuit diameter bound for bipartite graphs by an arbitrary additive constant. This further yields a family of polyhedra whose circuit diameter approaches  $\frac{4}{3} |V|$:
\begin{lemma}\label{lemma: lower bound circuit diameter}
For any $n\geq 4$, there is a graph $G=(V,E)$ on $|V|=n$ nodes and a vector $\vecc \in \R^{|E|}$ such that 
	\[ \diam_\Circuits\left(P_{G,\vecc}\right) \geq \frac{4}{3} |V|-4\]
\end{lemma}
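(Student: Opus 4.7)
The plan is to apply the glueing construction (Lemma \ref{lemmaGlueing}) to multiple copies of the $4$-node graph from Example \ref{THEex}, tuning the number of copies to match the target node count $n$. Since glueing $k$ graphs at a common node both adds their diameters and merges that one shared node, glueing $k$ copies of the $4$-node example produces a graph on $3k+1$ nodes whose associated dual network flow polyhedron has circuit diameter at least $4k$.

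For a given $n\geq 4$, I would set $k=\lfloor (n-1)/3\rfloor$, so that $3k+1\leq n\leq 3k+3$. Glueing $k$ copies of the graph from Example \ref{THEex} at a common node then yields a graph on exactly $3k+1$ nodes whose polyhedron has circuit diameter at least $4k$ by Lemma \ref{lemmaGlueing}. If $n>3k+1$, I would use the glueing construction once more to attach a sufficiently simple auxiliary graph on the remaining $n-(3k+1)\in\{1,2\}$ fresh nodes (for instance, a short directed path glued at the central node with edge costs chosen so that the associated polyhedron has a single vertex, hence circuit diameter $0$). Lemma \ref{lemmaGlueing} then ensures that the total circuit diameter is unchanged at $4k$.

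Finally, the arithmetic step: from $k\geq (n-3)/3$ we obtain
\[
\diam_\Circuits(P_{G,\vecc})\;\geq\; 4k\;\geq\; \tfrac{4(n-3)}{3}\;=\;\tfrac{4}{3}|V|-4,
\]
which is the claimed bound. The main conceptual ingredient is therefore the glueing lemma and the example, both established earlier; the only subtle point is the padding step that accommodates values of $n$ not of the form $3k+1$. The main (minor) obstacle is making sure the auxiliary graph used for padding really contributes $0$ to both the node count modulo the shared node and to the circuit diameter, which is straightforward by choosing its cost vector so that its polyhedron is a singleton.
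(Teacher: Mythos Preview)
Your proposal is correct and follows essentially the same approach as the paper: glue $k$ copies of the $4$-node example at a common node to get circuit diameter at least $4k$ on $3k+1$ nodes, then pad with one or two extra nodes when $n\not\equiv 1\pmod 3$. The paper phrases the padding as ``adding one leaf (two leaves)'' rather than invoking Lemma~\ref{lemmaGlueing} again, but this is the same construction; your only slip is calling the auxiliary polyhedron a \emph{singleton}---for a directed path it is an unbounded polyhedron with a single vertex, which is all you need for circuit diameter~$0$.
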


Thus our more general framework of arbitrary graphs is much more involved than the one for bipartite graphs. The key results of this paper are the following two theorems that, roughly speaking, tell us that turning to general graphs adds a factor of $|V|$ on the previous diameter bounds. Hence we get quadratic upper bounds on both the combinatorial and the circuit diameter.
\begin{theorem}[Combinatorial diameter]\label{thm:CombDiam}
The combinatorial diameter of dual network flow polyhedra $P_{G,\vecc}$ is bounded above by $min\{\left(|V|-1\right)\cdot |E|,\frac{|V|^3}{6}\}$.
\end{theorem}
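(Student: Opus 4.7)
The plan is to first identify the vertex--edge structure of $P_{G,\vecc}$ and then to bound the diameter via a staged spanning-tree transformation. It is standard that the vertices of $P_{G,\vecc}$ are in bijection with the spanning trees $T$ of $G$: given $T$, setting the inequalities corresponding to the arcs of $T$ to equality together with $u_0 = 0$ determines a unique potential $\veu^T$, where $u^T_v$ is the signed $\vecc$-length of the unique $T$-path from $0$ to $v$. Two vertices $\veu^T$ and $\veu^{T'}$ lie on a common edge of $P_{G,\vecc}$ exactly when $T$ and $T'$ differ by a single feasible arc swap $T' = T - e + e'$, i.e.\ one whose resulting potential still satisfies all remaining inequalities. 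Edge walks thus correspond to sequences of feasible pivots between spanning trees, and the diameter is the worst-case number of such pivots needed to transform one tree into another.

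For the bound $(|V|-1)\cdot |E|$, I fix the target tree $T^*$ corresponding to the target vertex and process its nodes in some order $v_1,\dots,v_{|V|-1}$. A key lemma will show that at stage $k$ one can reach a spanning tree agreeing with $T^*$ on the prefix $v_1,\dots,v_k$ using at most $|E|$ feasible pivots, while leaving the previously matched $u$-values untouched. Summing over the $|V|-1$ stages gives the first bound.

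For the sharper bound $\tfrac{|V|^3}{6}$, I order the nodes via a BFS (or DFS) of $T^*$ rooted at $0$ and refine the per-stage count. The point is that at stage $k$ only the $k$ already incorporated nodes participate in the pivots used to correctly attach $v_k$, which limits the number of pivots in that stage to at most $\binom{k}{2}$. Summing via the hockey-stick identity gives
\[
\sum_{k=1}^{|V|-1}\binom{k}{2} \;=\; \binom{|V|}{3} \;\leq\; \tfrac{|V|^3}{6}.
\]

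The main obstacle is the per-stage lemma: each pivot must itself be an edge of $P_{G,\vecc}$, not merely a valid spanning-tree exchange, and the previously fixed $u$-values must remain untouched for the rest of the walk. I expect the construction to rely on a greedy pivot rule that, at each step, chooses the entering arc $e'$ across the cut created by removing $e$ so as to tighten a suitable cost gap on the cut. This should simultaneously guarantee feasibility of each intermediate potential, monotone decrease of an explicit conflict potential whose maximum value is $|E|$ (for the first bound) or $\binom{k}{2}$ (for the second, exploiting that only pivots among the already-processed nodes are needed once the BFS order is fixed), and preservation of earlier commitments. Designing this pivot rule and verifying all three properties is where the bulk of the technical work lies.
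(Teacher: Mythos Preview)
Your high-level strategy---process the target tree one edge at a time and bound the number of feasible pivots per stage---matches the paper, but the proposal has two real gaps.

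First, for the $(|V|-1)\cdot|E|$ bound you never specify the pivot rule, and the ``conflict potential'' is left hypothetical. The paper makes this concrete: to insert the target arc $rs$, root the current tree at $s$, let $e=vw$ be the \emph{last backward} edge on the $r$--$s$ path, remove it to create a cut $R\cupdot S$ with $s\in S$, and let the unique inequality from $R$ to $S$ that becomes tight determine the entering arc. The crucial termination argument is that the deleted arc $e=vw$ has $v\in S$ and there is a directed path from $v$ to $s$ in the current tree; this path survives all subsequent pivots of this phase, so $v$ stays in $S$ and $e$ is never reinserted. Hence each phase uses at most $|E|$ pivots. Your proposal does not contain this idea, and without it there is no reason the process terminates in $|E|$ steps.

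Second, your argument for the $\tfrac{|V|^3}{6}$ bound is backwards. You claim that at stage $k$ only the $k$ already-incorporated nodes participate, giving $\binom{k}{2}$ pivots. But the already-incorporated nodes are precisely the ones whose potentials are \emph{fixed}; it is the remaining nodes that move during pivots. The paper handles this by \emph{contracting} each target edge once it is inserted, so that after inserting $j$ edges one is working in a dual network flow polyhedron on $|V|-j$ nodes; the next phase then costs at most $\binom{|V|-j}{2}$ pivots (counting possible backward edges across the cut). Summing $\sum_{i=2}^{|V|}\binom{i}{2}=\tfrac{|V|^3-|V|}{6}$ gives the bound. Your sum $\sum_{k=1}^{|V|-1}\binom{k}{2}$ happens to be smaller, but the justification you give for the per-stage bound does not hold: at stage $1$ you are in the full graph, not a graph on one node. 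You should replace the BFS-node bookkeeping with the contraction device (or an equivalent freezing of already-inserted constraints) and reverse the direction of the per-stage count.
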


\begin{theorem}[Circuit diameter]\label{thm:CirDiam}
The circuit diameter of dual network flow polyhedra $P_{G,\vecc}$ is bounded above by $\frac{|V|\cdot \left(|V|-1\right)}{2}$.
\end{theorem}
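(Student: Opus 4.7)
The plan is to prove the bound by induction on $n=|V|$. The base case $n=2$ is immediate, as $P_{G,\vecc}$ is then one-dimensional. For the inductive step, given two vertices $\veu,\veu^*\in P_{G,\vecc}$, I would split a circuit walk from $\veu$ to $\veu^*$ into two pieces. First, a walk of length at most $n-1$ carries $\veu$ to a point $\veu'$ on the face $H_w:=\{\,\veu\in P_{G,\vecc}:u_w=u^*_w\,\}$ for a well-chosen node $w\neq v_0$. Second, a walk of length at most $\binom{n-1}{2}$ from $\veu'$ to $\veu^*$ takes place within $H_w$: this is obtained from the induction hypothesis applied to the dual network flow polyhedron on the contracted graph $G'=G/\{v_0,w\}$ (merge $v_0$ and $w$ into a single node, shifting the costs of edges originally incident to $w$ by $u^*_w$), which has $n-1$ nodes. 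Concatenation then yields $(n-1)+\binom{n-1}{2}=\binom{n}{2}$.

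The central ingredient is a structural description of circuits: those of $P_{G,\vecc}$ are exactly the vectors $\pm\mathbf{1}_S$, where $(S,\bar S)$ is a bipartition of $V$ with $v_0\in\bar S$ and both $G[S]$ and $G[\bar S]$ connected (the bonds of $G$ separating $v_0$). For the first piece of the walk, I would pick $w$ to be a non-cut vertex of $G$ distinct from $v_0$ (such a $w$ exists in any connected graph on at least two nodes), initialize $S_1:=\{w\}$, and build the walk step by step: at step $k$, move along $\pm\mathbf{1}_{S_k}$ (oriented so as to push $u_w$ toward $u^*_w$) by the maximal feasible amount. Whenever the step terminates by making a fresh inequality $u_b-u_a\leq c_{ab}$ tight with $b\in S_k$ and $a\notin S_k$, define $S_{k+1}$ by adjoining $a$ together with every connected component of $G[\bar S_k\setminus\{a\}]$ that does not contain $v_0$. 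This adjustment preserves connectivity on both sides of the bipartition (so $\pm\mathbf{1}_{S_{k+1}}$ remains a circuit of $P_{G,\vecc}$), keeps the newly tightened constraint tight (since $a$ now moves in lockstep with $b$), and strictly increases $|S_k|$ by at least one. Because $|S_k|\leq n-1$, the walk terminates in at most $n-1$ steps.

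The main obstacle is arguing that this walk actually reaches $u'_w=u^*_w$ rather than stalling with a positive residual. Concretely, one has to show that at each stage either a positive step size along $\pm\mathbf{1}_{S_k}$ is available or $S_k$ can first be enlarged by the same recipe until it becomes so, and that once $S_k$ has grown to $V\setminus\{v_0\}$ the remaining gap in $u_w$ is closed in one final step. Both points ultimately rest on the feasibility of $\veu^*$: since the target value $u^*_w$ is realized by a feasible point, no arrangement of tight constraints encountered along the way can permanently obstruct motion of $u_w$ toward $u^*_w$ under the prescribed family of directions. A secondary verification is that the circuit directions of the contracted polyhedron on $G'$ lift to legitimate circuit walks in $P_{G,\vecc}$; those that are not themselves circuits of $P_{G,\vecc}$ admit a sign-compatible decomposition into circuits of $P_{G,\vecc}$ supported away from $w$, which is enough to push the induction through.
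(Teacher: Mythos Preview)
Your outline has the right high-level shape---iterated contraction to shrink the graph---but the choice of \emph{what} to contract creates two genuine gaps that the hand-waving does not close.

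\textbf{First phase.} Circuit steps are maximal by definition: you move along $\pm\mathbf{1}_{S_k}$ until a constraint becomes tight, not until $u_w$ hits a prescribed target value. Nothing pins that stopping point to $u_w=u^*_w$. Concretely, take $V=\{v_0,w,x\}$ with all six directed edges and choose $\vecc$ so that $\veu=(0,0,0)$ and $\veu^*=(0,5,10)$ are vertices, with $c_{xw}=2$, $c_{v_0x}=10$, and $c_{v_0w}$ large. Starting at $\veu$ with $S_1=\{w\}$, the first step stops at $u_w=2$ (the edge $xw$ becomes tight); then $S_2=\{w,x\}=V\setminus\{v_0\}$, and the next maximal step lands at $(0,12,10)$, overshooting $u^*_w=5$. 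Reversing direction along $\mathbf{1}_{V\setminus\{v_0\}}$ does not land at $5$ either. The appeal to ``feasibility of $\veu^*$'' does not rule this out.

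\textbf{Second phase.} Merging a possibly \emph{non-adjacent} pair $\{v_0,w\}$ does not produce a face of $P_{G,\vecc}$, and circuits of the contracted polyhedron $P_{G',\vecc'}$ need not lift to circuits of $P_{G,\vecc}$. Take $G$ the path $v_0\text{--}x\text{--}y\text{--}w$; then $w$ is a non-cut leaf, $G'$ is a $3$-cycle, and the circuit $\mathbf{1}_{\{x\}}$ of $P_{G',\vecc'}$ lifts to a direction whose complement $\{v_0,y,w\}$ is disconnected in $G$. Worse, in this $G$ every circuit $\mathbf{1}_S$ has $w\in S$, so there is \emph{no} circuit with $g_w=0$ at all---your promised ``sign-compatible decomposition supported away from $w$'' cannot exist. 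Even where a decomposition does exist, replacing one step by several destroys the step count you need for the induction. (There is also the smaller issue that $\veu'$ need not be a vertex of $P_{G',\vecc'}$, so the inductive hypothesis as stated does not apply to it.)

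The paper avoids both problems by targeting an \emph{edge} $rs$ of the destination spanning tree rather than a node value. Making $-u_r+u_s\le c_{rs}$ tight is precisely what a maximal circuit step achieves (one cannot overshoot tightness), and contracting an actual edge $rs\in E$ yields a genuine face of $P_{G,\vecc}$; since $r$ and $s$ are adjacent, the complement of any lifted bipartition stays connected, so circuits of the contracted polyhedron are automatically circuits of $P_{G,\vecc}$. Lemma~\ref{lemmaAddIneq} then guarantees that the arborescence-growing procedure must eventually insert $rs$.
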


Theorem \ref{thm:CirDiam} strengthens the cubic bound implied by Corollary 5 in \cite{DeLoera+Hemmecke+Lee:14}.

The vertices, edges, and circuits of a dual network flow polyhedron reveal a lot of combinatorial structure. In Section \ref{sec:tools}, we provide some basic results on their graph-theoretical interpretation and use it to prepare some tools for the proofs of our main theorems. The proofs themselves then are found in Section \ref{sec:proofs}. In Section \ref{sec:ex}, we conclude the paper with a formal introduction of our glueing construction and by turning to a more detailed analysis of Example \ref{THEex} and the resulting family of polyhedra. 

\bigskip

\section{Basic results and tools}\label{sec:tools}

Throughout this paper, we will exploit the special structure of dual network flow polyhedra $P_{G,\vecc}$ by relating the vertices and edges of such polyhedra to subgraphs of the defining graph $G$. For $\veu\in P_{G,\vecc}$, we denote by $G(\veu)$ the graph with nodes $V$ and with edges $ab\in E$ for which $-u_a+u_b\leq c_{ab}$ is tight. If the polyhedron $P_{G,\vecc}$ is non-degenerate, these graphs have no cycles. 

The \emph{vertices} of $P_{G,\vecc}$ are determined by the sets of inequalities $-u_a+u_b\leq c_{ab}$ that are tight. It can be shown that that $\veu\in P_{G,\vecc}$ is a vertex if and only if $G(\veu)$ is a \emph{spanning} subgraph of $G$.
In particular, every such spanning subgraph contains a spanning tree of $G$ with $|V|-1$ edges corresponding to (a subset of) the inequalities $-u_a+u_b\leq c_{ab}$ that are tight at the vertex. This spanning tree uniquely determines the vertex $\veu$, since we assume $u_0=0$.

The \emph{circuit directions} of $P_{G,\vecc}$ can be described as follows: Let $R,S\subseteq V$ be connected nonempty node sets with $R \cupdot S=V$  (which implies $R\cap S=\emptyset$). W.l.o.g., we may assume $0\in R$. Then the vector $\veg\in\R^{M+N}$ with
\begin{equation}\label{Eq: Construction of circuit direction from R and S.}
  g_i=\left\{\begin{array}{ll}0, & \text{if } i\in R,\\ 1, & \text{if } i\in S,\end{array}\right.
\end{equation}
is an edge direction of $P_{G,\vecc}$ for some right-hand side $\vecc$. In fact, it can be shown that these are all potential edge directions and hence they constitute the set of circuits $\Circuits_G$ associated to the matrix defining $P_{G,\vecc}$.

Let $\vey \in P_{G,\vecc}$. We apply a circuit step given by $R \cupdot S=V$ or the corresponding $\veg$ by setting $\vey':= \vey \pm \epsilon \veg$, where $\epsilon$ is the smallest non-negative number such that an inequality $-u_a+u_b\leq c_{ab}$ with $a\in R$ and $b\in S$ (respectively $b\in R$ and $a\in S$) becomes tight. This means that we increase  (respectively decrease) all components $y_s$ with $s\in S$ until an edge from $R$ to $S$ (respectively from $S$ to $R$) is inserted.

Two vertices $\veu^{(1)}, \veu^{(2)}$ of $P_{G,\vecc}$ are connected by an edge if and only if the subgraph of $G$ with edge set $E\left(G(\veu^{(1)})\right)\cap E\left(G(\veu^{(2)})\right)$ consists of exactly two connected components. 
Then the node sets $R$ and $S$ of these components describe the edge direction via Equation (\ref{Eq: Construction of circuit direction from R and S.}). 
 
We continue with some advanced tools and results that we will need in Section \ref{sec:proofs}. 
The idea of \emph{contracting edges} simplifies the proofs of Theorems \ref{thm:CombDiam} and \ref{thm:CirDiam}:
Assume that we have a vertex (feasible point) $\vey$ of a polyhedron $P_{G,\vecc}$ from which we want to construct an edge walk (circuit walk) to some vertex $\vew$, and assume that $E(G(\vey))$ and $E(G(\vew))$ have an edge $ab$ in common. Then we wish to keep this edge on the remaining edge walk (circuit walk). Therefore, the difference between $u_a$ and $u_b$ has to remain constant, which means that in every edge step (circuit step) given by $V=R\cupdot S$, $a$ and $b$ are assigned both to $R$ or both to $S$. To simplify this idea, we interpret $a$ and $b$ as one node in the following sense:  We \emph{contract} the edge $ab$  and continue our edge walk (circuit walk) in a smaller polyhedron defined on a graph with one node less and adjusted edge set.

Geometrically this corresponds to intersecting the dual network flow polyhedron with the hyperplane $\left\{\veu \in \R^{|V|}\colon -u_a+u_b = c_{ab}\right\}$. This defines a face of the polyhedron, which is a dual network flow polyhedron in its own right. We then continue the edge walk (circuit walk) on this face.
More formally, let $ab$ be the common edge in $G=(V,E)$. The new polyhedron $P_{G',c'}$ is defined by a new graph $G'=(V',E')$ and a new vector $c'$ (for a simple notation we use $c_{ij}=\infty$ if $ij\notin E$) as follows:
\begin{align*}
	V'=&V\backslash \{b\} \\
	E'=&\left\{ij: ij\in E \text{ and } i,j\neq a,b \right\} \\
		 &\cup \left\{aj: aj \in E \text{ or } bj \in E\right\} 
			\cup \left\{ia: ia \in E \text{ or } ib \in E\right\}\\
c'_{ij}=&\begin{cases}
			c_{ij} & \text{for }\;  i,j\neq a, \;ij\in E' \\
			\min\left\{c_{aj}, c_{bj}+c_{ab}\right\} & \text{for }\; i=a,\; aj\in E'\\ 
			\min\left\{c_{ia} + c_{ab}, c_{ib}\right\} & \text{for }\; j=a,\; ia\in E'
	\end{cases}
\end{align*}
For the definition of $\vecc'$, observe that if $ab$ exists in $G(\veu)$ (i.e.{} $-u_a+u_b=c_{ab}$), and $aj,bj\in E$ for some $j$, then $-u_a+u_j\leq c_{aj}$ will become tight before $-u_b+u_j\leq c_{bj}$ when decreasing both $u_a$ and $u_b$ if and only if $c_{aj} \leq c_{bj}+c_{ab}$. Hence, when keeping $ab$, the latter case will never occur and only the first inequality is relevant.  On the other hand $c_{aj} > c_{bj}+c_{ab}$ implies that only $-u_b+u_j\leq c_{bj}$ can become tight, such that we only need to consider this inequality in the following. In this case we further have to adjust the value for $\vecc$ (observe $u_b= u_a+c_{ab}$).
The other case is analogous.
Hence, every edge walk (circuit walk) in $P_{G',c'}$ admits an edge walk (circuit walk) in $P_{G,c}$ that keeps the edge $ab$, such that we can continue the walk in the smaller polyhedron.

Next we show that the existence of a feasible point whose graph contains  a certain edge $ab$ implies the non-existence of a feasible point whose graph contains a different directed path from $a$ to $b$.
\begin{lemma}\label{lemmaAddIneq}
Let $P_{G,\vecc}$ be a dual network flow polyhedron.
Let $v_0v_k\in E$ such that in $G$ there is another directed path $\mathcal{P}$ from $v_0$ to $v_k$, i.e.{} there are nodes $v_0,v_1,\ldots,v_k\in V$, $k\geq 2$, such that $v_iv_{i+1}\in E$ for all $i=0,\ldots,k-1$.  

Assume there is a feasible point $\vew\in P_{G,c}$ with $v_0v_k\in E(G(\vew))$ and let $\veu\in P_{G,c}$ with $\mathcal{P}\subset G(\veu)$. Then also $v_0v_k\in E(G(\veu))$. Thus, if $P_{G,\vecc}$ is non-degenerate, there can be no such $\veu$.
\end{lemma}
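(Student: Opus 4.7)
The plan is to argue purely from the tightness conditions that define $E(G(\vey))$ for feasible points $\vey$, combined with the feasibility inequalities. The key observation is that a directed path of tight edges telescopes into a single equation for its endpoints.

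First I would translate the hypotheses into algebra. Tightness of the edge $v_0v_k$ at $\vew$ means $-w_{v_0}+w_{v_k}=c_{v_0v_k}$. Tightness of every edge of $\mathcal{P}$ at $\veu$ means $-u_{v_i}+u_{v_{i+1}}=c_{v_iv_{i+1}}$ for $i=0,\dots,k-1$. Summing the latter telescopes to
\[
-u_{v_0}+u_{v_k}=\sum_{i=0}^{k-1} c_{v_iv_{i+1}}.
\]
By feasibility of $\veu$ applied to the edge $v_0v_k$, this sum is at most $c_{v_0v_k}$.

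Next I would derive the reverse inequality from $\vew$. Feasibility of $\vew$ along each edge of $\mathcal{P}$ gives $-w_{v_i}+w_{v_{i+1}}\le c_{v_iv_{i+1}}$, and summing these telescoping inequalities yields $-w_{v_0}+w_{v_k}\le \sum_{i=0}^{k-1} c_{v_iv_{i+1}}$. Since the left-hand side equals $c_{v_0v_k}$ by tightness of $v_0v_k$ at $\vew$, we obtain $c_{v_0v_k}\le \sum_{i=0}^{k-1} c_{v_iv_{i+1}}$. Combining with the previous bound forces equality throughout, so $-u_{v_0}+u_{v_k}=c_{v_0v_k}$, i.e.\ $v_0v_k\in E(G(\veu))$.

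Finally, for the non-degeneracy statement, I would invoke the fact recalled earlier in Section \ref{sec:tools} that $G(\veu)$ is acyclic whenever $P_{G,\vecc}$ is non-degenerate. If $\mathcal{P}\subset G(\veu)$ and, by what we just proved, also $v_0v_k\in E(G(\veu))$, then $G(\veu)$ contains $\mathcal{P}$ together with the chord $v_0v_k$, producing a cycle and contradicting non-degeneracy. I do not expect any real obstacle here; the lemma is essentially a shortest-path/telescoping argument on tight dual constraints, and the only subtlety worth stating cleanly is the combination of feasibility at $\veu$ along $v_0v_k$ with feasibility at $\vew$ along the edges of $\mathcal{P}$ to sandwich the path cost equal to $c_{v_0v_k}$.
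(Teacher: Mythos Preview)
Your proposal is correct and follows essentially the same telescoping-and-sandwich argument as the paper: both combine tightness of $v_0v_k$ at $\vew$ with feasibility of $\vew$ along $\mathcal{P}$ to get $c_{v_0v_k}\le\sum_i c_{v_iv_{i+1}}$, and tightness of $\mathcal{P}$ at $\veu$ with feasibility of $\veu$ at $v_0v_k$ to get the reverse inequality. Your explicit justification of the non-degeneracy clause via the acyclicity of $G(\veu)$ is a welcome addition that the paper leaves implicit.
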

\begin{proof}
The feasible point $\vew\in P_{G,c}$ satisfies
\[ c_{v_0v_k}= -w_{v_0}+w_{v_k}=\sum_{i=0}^{k-1} \left(-w_{v_i}+w_{v_{i+1}}\right) \leq \sum_{i=0}^{k-1} c_{v_iv_{i+1}}\ . \]
$\veu\in P_{G,c}$ satisfies $-u_{v_i}+u_{v_{i+1}}=c_{v_iv_{i+1}}$ for $i=0,\ldots,k-1$ and $-u_{v_0}+u_{v_k}\leq c_{v_0v_k}$. We then see
\[ \sum_{i=0}^{k-1} c_{v_iv_{i+1}} = \sum_{i=0}^{k-1} \left(-u_{v_i}+u_{v_{i+1}}\right) = -u_{v_0}+u_{v_k}\leq c_{v_0v_k} \leq \sum_{i=0}^{k-1} c_{v_iv_{i+1}}.\]
Hence, all inequalities must be satisfied with equality and we get $-u_{v_0}+u_{v_k}=c_{v_0v_k}$; that is, $v_0v_k\in E(G(\veu))$. \eoproof
\end{proof}

Observe that Lemma \ref{lemmaAddIneq} can easily be generalized to a slightly stronger statement:
Assume that there is a feasible point whose graph contains a directed path from some node $v_0$ to some node $v_k$. Then every point of the dual network flow polyhedron, whose graph contains another directed $v_0-v_k$-path, must contain the first path as well. This can only happen in the degenerate case.

\section{Proofs}\label{sec:proofs}

We begin with the proof of Theorem \ref{thm:CombDiam}. Note that for proving upper bounds on the combinatorial diameter of polyhedra it is enough to consider non-degenerate polyhedra, as by perturbation any polyhedron can be turned into a non-degenerate polyhedron whose diameter is at least as large as the one of the original polyhedron.

\mbox{}\\
{\em \noindent Proof of Theorem \ref{thm:CombDiam}.} 
Let $\veu^{(1)}$ and $\veu^{(2)}$ be two vertices of the polyhedron $P_{G,\vecc}$ given by spanning trees $T_1=G(\veu^{(1)})$ and $T_2=G(\veu^{(2)})$. We construct an edge walk from $\veu^{(1)}$ to $\veu^{(2)}$ as follows:
Being at a vertex $\vey$ of $P_{G,\vecc}$ with spanning tree $T=G(\vey)$, we choose an edge $rs\in T_2\backslash T$ we wish to insert. We show how to construct an edge walk of length at most $|E|$ that leads to a vertex $\vex$ for which $rs\in E(G(\vex))$, that is, our specified edge is added to the corresponding spanning tree. Then we contract this edge to ensure that we do not delete it again. Starting at $\vey=u^{(1)}$ and repeating this for all $|V|-1$ edges in $T_2$ proves the claimed bound of $\left(|V|-1\right)\cdot|E|$.

Now, let $\vey$ be the current vertex in our edge walk and let $T=G(\vey)$ be the corresponding spanning tree. We choose an arbitrary edge $rs\in T_2$ we wish to insert.
Given a spanning tree $T$ and the node $s$ we distinguish forward and backward edges in $E(T)$: We see $s$ as the root of the tree $T$. Then every edge in $E(T)$ lies on a unique path starting at $s$ (independent of the directions of the edges). We call the edges pointing away from $s$ \emph{backward edges}, the edges pointing towards $s$ \emph{forward edges}.

In $T$ there is a unique path (undirected) connecting $r$ and $s$.  Let $e$ be the last backward edge on this path. Note that by Lemma \ref{lemmaAddIneq} such an edge must exist. Let $R$ and $S$ be the node sets of the connected components of $T-e$ such that $r\in R$ and $s\in S$. Observe that in particular all nodes from which we can reach $s$ on a directed path in the spanning tree $T$ are assigned to $S$ (and these nodes form an arborescence of forward edges with root $s$).

We wish to include the edge $rs$ in our graph, that is, we wish to make the inequality $-u_r+u_s\leq c_{rs}$ tight. W.l.o.g. we  assume $0\in R$, therefore we add an $\epsilon$ to all components $y_i$ of $\vey$ with $i\in S$. (If $0\notin R$, we would subtract $\epsilon$ from all components $y_i$ with $i\in R$.) We choose as $\epsilon$ the smallest non-negative number such that any inequality $-u_a+u_b\leq c_{ab}$ with $a\in R$ and $b\in S$ becomes tight. Due to non-degeneracy there is only one such inequality. This creates a new feasible point $y'$, which is indeed a neighboring vertex of $\vey$ by construction. 
\begin{figure}[H]
		\centering
			\begin{tikzpicture}[scale=1.6]
					\coordinate (r) at (0.1,0.1);
					\coordinate (v1) at (-0.7,-0.1);
					\coordinate (v2) at (-0.3,-0.7);
					\coordinate (v3) at (0.4,-0.4);
					\coordinate (v4) at (1.7,-0.2);
					\coordinate (v5) at (2.5,-0.5);
					\coordinate (v6) at (-1,-1);
					\coordinate (v7) at (2.9,-1.1);
					\coordinate (v8) at (3.2,-0.2);
					\coordinate (a) at (0.4,-1.2);
					\coordinate (s) at (2.5,0.3);
					\coordinate (b) at (2.1,-1);				
					\draw [fill, black] (r) circle [radius=0.02];
					\draw [fill, black] (s) circle [radius=0.02];
					\draw [fill, black] (v1) circle [radius=0.015];
					\draw [fill, black] (v2) circle [radius=0.015];
					\draw [fill, black] (v3) circle [radius=0.015];
					\draw [fill, black] (v4) circle [radius=0.015];
					\draw [fill, black] (v5) circle [radius=0.015];
					\draw [fill, black] (v6) circle [radius=0.015];
					\draw [fill, black] (v7) circle [radius=0.015];
					\draw [fill, black] (v8) circle [radius=0.015];
					\draw [fill, black] (a) circle [radius=0.015];
					\draw [fill, black] (b) circle [radius=0.015];		
					\node[above] at (v1) {$r$};
					\node[above] at (v3) {$w$};
					\node[above] at (v4) {$v$};
					\node[left] at (s) {$s$};
					\node[above] at (a) {$a$};
					\node[above] at (b) {$b$};
					\node at (1.1,-0.2) {$e$};
					\node at (1.1,-1.3) {$f$};
					\node at (2.85,-0.7) {$e'$};
					\node at (-1.35,0) {$R$};
					\node at (3.6,0) {$S$};					
					\draw[->, > = latex'] (v1)--(v6);
					\draw[->,  > = latex'] (r)--(v1);
					\draw[->, > = latex', line width=1.5] (v1)--(v2);
					\draw[<-, > = latex', line width=1.5] (v2)--(v3);
					\draw[<-, > = latex', line width=1.5] (v3)--(v4);
					\draw[->,  > = latex', line width=1.5] (v4)--(v5);
					\draw[->, > = latex', line width=1.5] (v5)--(s);
					\draw[->, > = latex'] (v2)--(a);
					\draw[dashed, ->, > = latex'] (a)--(b);
					\draw[->, > = latex'] (b)--(v7);
					\draw[<-, > = latex'] (v7)--(v5);
					\draw[<-, > = latex'] (s)--(v8);					
					\draw[dotted] (-0.2,-0.6) circle (1.1);
					\draw[dotted] (2.45,-0.45) circle (1);
			\end{tikzpicture} 	
\end{figure}
So, in this edge step $e=vw$ is deleted and ${f}=ab$ is inserted.
If we inserted ${f}=rs$, we contract this edge and start over again, aiming to insert another edge $r's'$ from $E(T_2)$. 
Otherwise we consider the path connecting $r$ and $s$ in the new spanning tree $T'$. As before the last backward edge $e'$ defines sets $R'$ and $S'$ and we repeat the same procedure until eventually $rs$ is inserted. It remains to prove that this indeed happens after at most $|E|$ steps. It is enough to show that the deleted edge $e=vw$ is not inserted again: As there is a directed path from $v$ to $s$ in $G(\vey)$, $v$ and all nodes on this path will always be assigned to $S$ (in particular, no edge on this path is deleted). As only edges from $R$ to $S$ are inserted, $e=vw$ with $v\in S$ cannot be inserted twice. This proves the claimed upper bound $(|V|-1)\cdot E)$.

To see the upper bound $\frac{|V|^3}{6}$, we only have to change the way we count the number of steps that we need to insert the edge $rs$ in a current graph on $i$ nodes: Note that it has at most  $i\cdot (i-1)$ edges, and in particular at most $\binom{i}{2}$ edges $e=vw$  with $v\in S$ and $w\in R$. As we only insert edges from $R$ to $S$, this tells us an upper bound of $\binom{i}{2}$ steps until $rs$ inserted. After contracting this edge, we start this process again on a graph with $i-1$ nodes. Hence we obtain an edge walk of length at most

	\begin{align*}
			 & \sum_{i=2}^{|V|}\sum\limits_{j=1}^{i-1} j
			= \sum_{i=2}^{|V|}\left[ \frac{1}{2}i\cdot(i-1) \right] 
			= \frac{1}{2}\left[\sum_{i=2}^{|V|} i^2 - \sum_{i=2}^{|V|} i\right]
		  =& \frac{|V|^3-|V|}{6} \leq \frac{|V|^3}{6} .
	\end{align*}
 \eoproof

We continue with the proof of Theorem \ref{thm:CirDiam}. Here we cannot simply assume that the polyhedron is non-degenerate, as it is not clear whether for every degenerate polyhedron there is a perturbed non-degenerate polyhedron bounding the circuit diameter of the original one from above \cite{CircuitDiam}.
\\\\
{\em \noindent Proof of Theorem \ref{thm:CirDiam}.} 
Let $\veu^{(1)}$ and $\veu^{(2)}$ be two vertices of the polyhedron $P_{G,\vecc}$. Let $T_2$ be a spanning tree with $E(T_2)\subseteq E(G(\veu^{(2)}))$. Then $\veu^{(2)}$ is the unique point of $P_{G,\vecc}$ whose graph contains all edges in $E(T_2)$.
We construct a circuit walk from $\veu^{(1)}$ to $\veu^{(2)}$ as follows: 

Being at a point $\vey \in P_{G,\vecc}$ of our circuit walk, we choose an edge $rs\in T_2\backslash E(G(\vey))$ we wish to insert. We construct a circuit walk to a point $\vex\in P_{G,\vecc}$ with $rs\in E(G(\vex))$. This walk has length at most $i-1$, where $i$ is the number of nodes in the current underlying graph. As in the proof of Theorem \ref{thm:CombDiam}, we then contract it to make sure that we do not delete it when continuing our circuit walk.
We start with $\vey=\veu^{(1)}$ and repeat this procedure for all $|V|-1$ edges in $E(T_2)$. As the number of nodes decreases after every contraction this then yields the quadratic bound of $\sum\limits_{i=1}^{|V|-1} i= \frac{1}{2}\left(|V|\cdot (|V|-1)\right)$.

Now, let $\vey$ be a feasible point  in the circuit walk. Let $rs\in E(T_2)\backslash E(G(\vey))$ be an arbitrary edge we wish to insert, that is, we have to make $-u_r+u_s\leq c_{rs}$ tight. To this end, we construct a circuit direction that increases the component $y_s$. 
This circuit is given by $R \cupdot S=V$ for node sets $R$ and $S$ that are constructed by the following sequence of rules:
\begin{compactenum}
	\item $r$ is assigned to $R$.
	\item $s$ is assigned to $S$.
	\item All nodes from $V\backslash \{r\}$ from which $s$ can be reached on a directed path using edges in $E(G(\vey))$ are assigned to $S$. (These edges form an arborescence with root $s$.)
	\item All nodes $t\in V\backslash S$ that are connected to $r$ in the underlying undirected graph are assigned to $R$.
	\item All remaining nodes are assigned to $S$.
\end{compactenum}
Observe that from $s$ we cannot reach $r$ on a directed path in $E(G(\vey))$ by Lemma \ref{lemmaAddIneq}, hence the sets $R$ and $S$ are well-defined. Further, they satisfy all the conditions to define a circuit. Let $\veg$ be the corresponding circuit direction defined via Equation (\ref{Eq: Construction of circuit direction from R and S.}). W.l.o.g.{} we assume that $0\in R$. The case $0\in S$ works analogously by merely switching the roles of $R$ and $S$ and subtracting $\epsilon \veg$ to decrease $y_r$. 

We now apply the circuit step given by $\veg$, that is, we get the next point in our circuit walk as $\vey':=\vey+\epsilon \veg$,  where $\epsilon$ is the smallest non-negative number such that an inequality $-u_a+u_b\leq c_{ab}$ with $a\in R$ and $b\in S$ becomes tight (observe that there could be more than one such inequality, as we do not assume non-degeneracy of the polyhedron $P_{G,\vecc}$). In particular, the gap in between  $-u_r+u_s$ and its upper bound $c_{rs}$ becomes smaller. If $rs$ was indeed inserted we contract the edge and continue in a smaller polyhedron.
\begin{figure}[H]
		\centering
			\begin{tikzpicture}[scale=2.2]
					\coordinate (r) at (1,0.1);
					\coordinate (s) at (2.5,0.1);
					\coordinate (v1) at (2.35,-0.3);
					\coordinate (v2) at (2.8,-0.3);
					\coordinate (v3) at (2.55,-0.8);
					\coordinate (v4) at (2.2,-0.8);
					\coordinate (v5) at (2.8,-0.7);
					\coordinate (v6) at (2.65,-1.1);
					\coordinate (v7) at (3.05,-0.8);
					\coordinate (v8) at (2.9,-0.9);
					\coordinate (v9) at (3.1,-0.6);
					\coordinate (w1) at (0.6,-0.3);
					\coordinate (w2) at (1.3,-0.3);
					\coordinate (w3) at (0.9,-0.8);
					\coordinate (w4) at (0.5,-0.7);
					\coordinate (w5) at (1.2,-0.7);
					\draw [fill, black] (r) circle [radius=0.02];
					\draw [fill, black] (s) circle [radius=0.02];
					\draw [fill, black] (v1) circle [radius=0.015];
					\draw [fill, black] (v2) circle [radius=0.015];
					\draw [fill, black] (v3) circle [radius=0.015];
					\draw [fill, black] (v4) circle [radius=0.015];
					\draw [fill, black] (v5) circle [radius=0.015];
					\draw [fill, black] (v6) circle [radius=0.015];
					\draw [fill, black] (v7) circle [radius=0.015];
					\draw [fill, black] (v8) circle [radius=0.015];
					\draw [fill, black] (v9) circle [radius=0.015];
					\draw [fill, black] (w1) circle [radius=0.015];
					\draw [fill, black] (w2) circle [radius=0.015];
					\draw [fill, black] (w3) circle [radius=0.015];
					\draw [fill, black] (w4) circle [radius=0.015];
					\draw [fill, black] (w5) circle [radius=0.015];					
					\node[above] at (r) {$r$};
					\node[above] at (s) {$s$};
					\node[right] at (v1) {$b$};
					\node[left] at (w2) {$a$};					
					\node at (0.05,0) {$R$};
					\node at (3.6,0) {$S$};					
					\draw[<-, > = latex', line width= 1.3] (s)--(v1);
					\draw[<-,  > = latex', line width= 1.3] (s)--(v2);
					\draw[<-, > = latex', line width= 1.3] (v1)--(v3);
					\draw[<-, > = latex', line width= 1.3] (v1)--(v4);
					\draw[<-,  > = latex', line width= 1.3] (v2)--(v5);
					\draw[<-,  > = latex', line width= 1.3] (v4)--(v6);
					\draw[<-,  > = latex'] (r)--(w1);
					\draw[->,  > = latex'] (w2)--(r);
					\draw[<-,  > = latex'] (w3)--(w1);
					\draw[<-,  > = latex'] (w1)--(w4);
					\draw[<-,  > = latex'] (w5)--(w2);
					\draw[->,  > = latex', dashed] (w2)--(v1);
					\draw[->,  > = latex', line width= 1.3] (v4)--(w5);
					\draw[->,  > = latex', line width= 1.3] (v6)--(w3);
					
					\draw[dotted] (0.9,-0.35) circle (0.7);
					\draw[dotted] (2.6,-0.4) circle (0.85);
			\end{tikzpicture} 	
\end{figure}
Otherwise, the inserted edge extends the arborescence by at least the node $a$.
We again apply a circuit step by constructing sets $R'$ and $S'$ for $y'$ as before, which inserts $rs$ or extends the arborescence further. 
Continuing like this after at most $i-2$ steps all nodes but $r$ are contained in the arborescence (if $rs$ was not already inserted). Then the next step must add $rs$ by Lemma \ref{lemmaAddIneq}. \eoproof

Observe that these diameter bounds also hold for dual network flow polyhedra defined on directed graphs that are not connected. To make the polyhedron pointed, we set, for each connected component, the value of one variable to zero (just as we fixed $u_0=0$ for connected graphs with just one connected component). Then the algorithmic approaches described in the proofs of Theorem \ref{thm:CombDiam} and Theorem \ref{thm:CirDiam} can be applied to each connected component individually, yielding even better bounds on the combinatorial diameter and the circuit diameter.

\section{Lower bounds}\label{sec:ex}

In the above, we derived quadratic upper bounds on the circuit and the combinatorial diameter of dual transportation polyhedra. We now complement our discussion by constructing an infinite family of graphs that exhibit that the gap between the number of nodes $|V|$ and the circuit diameter of a polyhedron associated with a certain graph can be arbitrarily large.

To this end, we begin with a formal introduction of a \emph{glueing construction} for graphs:
Let $G_i=\left(V_i,E_i\right)$, $i=1,\ldots,k$ be $k$ connected directed graphs. For every graph choose an arbitrary node $v_0^i\in V_i$. We construct a new graph $G=\left(V,E\right)$ by glueing the graphs together at the $v_0^i$, joining them to one node $v_0$. Formally, the node sets and the edge set are given by
\begin{align*}
V:=& \left\{v_0\right\}\cup \bigcup_{i=1}^k \left(V_i\backslash \left\{v_0^i\right\}\right) \\
E:=& \bigcup_{i=1}^k \left(\, \left\{ab:\ ab\in E_i, a,b\neq v^i_0\right\} \cup \left\{v_0b: v_0^ib\in E_i \right\}\cup \left\{av_0: av_0^i\in E_i \right\}\, \right).
\end{align*}
We depict the graphs $G_i$ by highlighting the nodes $v_0^i$, while all remaining nodes and edges are represented by a cycle:

\begin{figure}[H]
\centering
\begin{tikzpicture}
\useasboundingbox (-1,0) rectangle (2,1) ;
	 \node at (0.6,0.6) {$G_1$};
   \draw (0,0) .. controls (-0.5,3) and  (3,-0.5) ..  (0,0); 
	 \node (v0) at  (0,0) {$v_0^1$};
	 \draw[fill= white] (v0) circle  (0.3);
	 \node at (v0) {$v_0^1$};
 \end{tikzpicture}
\begin{tikzpicture}
\useasboundingbox (-0.8,-1) rectangle (0,1) ;
	 \node at (0.6,-0.6) {$G_2$};
	 \draw (0,0) .. controls (0.5,-3) and  (3,-0.5) ..  (0,0); 
	 \node (v0) at  (0,0) {$v_0^2$};
	 \draw[fill= white] (v0) circle  (0.3);
	 \node at (v0) {$v_0^2$};
 \end{tikzpicture}
\begin{tikzpicture}
\useasboundingbox (-4,-1.5) rectangle (0,0) ;
	 \node at (-0.6,-0.6) {$G_3$};
	 \draw (0,0) .. controls (-0.3,-4) and  (-3,-0.6) ..  (0,0); 
	 \node (v0) at  (0,0) {$v_0^3$};
	 \draw[fill= white] (v0) circle  (0.3);
	 \node at (v0) {$v_0^3$};
 \end{tikzpicture}
\begin{tikzpicture}
\useasboundingbox (-3,0.2) rectangle (-0.5,3) ;
	 \node at (-0.6,0.6) {$G_4$};
   \draw (0,0) .. controls (-0.5,3) and  (-3,0.5) ..  (0,0); 
	 \node (v0) at  (0,0) {$v_0^4$};
 	 \draw[fill= white] (v0) circle  (0.3);
	 \node at (v0) {$v_0^4$};
 \end{tikzpicture}
\end{figure}
Glueing these $4$ graphs together yields a graph $G$ that can be illustrated as follows:

\begin{figure}[H]
\centering
\begin{tikzpicture}
\useasboundingbox (-2,-2) rectangle (2,2) ;
  \node (v0) at  (0,0) {$v_0$};
	\draw (v0) circle (0.4);
 	\node at (0.6,0.6) {$G_1$};
	\draw (0,0) .. controls (-0.4,3) and  (3,-0.4) ..  (0,0); 
	\node at (0.6,-0.6) {$G_2$};
	\draw (0,0) .. controls (0.5,-3) and  (3,-0.5) ..  (0,0); 
	\node at (-0.6,-0.6) {$G_3$};
	\draw (0,0) .. controls (-0.3,-4) and  (-3,-0.6) ..  (0,0); 
	\node at (-0.6,0.6) {$G_4$};
	\draw (0,0) .. controls (-0.5,3) and  (-3,0.5) ..  (0,0); 
	\draw[fill= white] (v0) circle  (0.4);
  \node (v0) at  (0,0) {$v_0$};
 \end{tikzpicture}
\end{figure}

Now the diameters of the polyhedra associated to these graphs are directly related.
\begin{lemma}\label{lemmaGlueing}
Let $P_{G_i,\vecc^i}$, $i=1,\ldots,k$ be arbitrary dual network flow polyhedra with combinatorial (circuit) diameter equal to $d_i$, respectively at least $d_i$.
Let $G$ be the graph obtained by glueing these $k$ graphs together, and define $c\in \R^{|E|}$ by $c_{lj}= c^i_{lj}, lj\in E_i$.

Then $P_{G,\vecc}$ has combinatorial (circuit) diameter $\sum_{i=1}^k d_i$, respectively at least $\sum_{i=1}^k d_i$.
\end{lemma}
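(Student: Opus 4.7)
The proof plan rests on the observation that the glueing construction makes $P_{G,\vecc}$ a Cartesian product of the factor polyhedra, up to the natural identification of the glueing node. First I would show that, since $v_0$ is the only shared node and $u_{v_0}=0$ is fixed, and every constraint $-u_a+u_b\leq c_{ab}$ involves variables from a single $V_i$ (because every edge of $G$ inherits from exactly one $E_i$), the map $\veu \mapsto (\veu|_{V_i})_{i=1}^{k}$ gives a linear isomorphism $P_{G,\vecc}\cong \prod_{i=1}^{k} P_{G_i,\vecc^i}$, where we identify the fixed coordinate $u_{v_0^i}=0$ of each factor with $u_{v_0}=0$.

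From this decomposition, vertices of $P_{G,\vecc}$ correspond to tuples $(\veu^1,\ldots,\veu^k)$ of vertices, one from each factor. Using the paper's spanning-tree characterization, a spanning tree of $G$ is precisely a disjoint union of spanning trees of the $G_i$, joined at $v_0$. Two vertices $\veu,\vew$ of $P_{G,\vecc}$ are adjacent iff $E(G(\veu))\cap E(G(\vew))$ has exactly two connected components; since the $G_i$ share only the node $v_0$, this two-component condition can hold iff $\veu^j=\vew^j$ for all but one index $i$, and $\veu^i,\vew^i$ are adjacent in $P_{G_i,\vecc^i}$. For the combinatorial diameter, the upper bound $\diam(P_{G,\vecc})\leq\sum d_i$ follows by walking in each factor sequentially: keep all factors fixed except factor $i$, perform an edge walk of length at most $d_i$ there, then move to the next factor. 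For the lower bound, choose $\veu^{(1,i)},\veu^{(2,i)}$ in each $P_{G_i,\vecc^i}$ at combinatorial distance exactly $d_i$; any edge walk in $P_{G,\vecc}$ between the combined vertices $\veu^{(1)},\veu^{(2)}$ restricts to an edge walk in $P_{G_i,\vecc^i}$ from $\veu^{(1,i)}$ to $\veu^{(2,i)}$ by keeping only the steps that change the $i$-th block, so the total walk has at least $\sum d_i$ steps.

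For the circuit diameter, the extra ingredient is that circuits of $P_{G,\vecc}$ are \emph{localized} to a single factor. A circuit is determined by a partition $V=R\cupdot S$ with both $R$ and $S$ connected; WLOG $v_0\in R$. Since the $G_i$ share only $v_0$ and $S$ avoids $v_0$, connectedness of $S$ forces $S\subseteq V_i\setminus\{v_0^i\}$ for some unique $i$; the corresponding circuit direction, constructed via Equation~(\ref{Eq: Construction of circuit direction from R and S.}), then matches a circuit of $P_{G_i,\vecc^i}$ and is zero on coordinates outside $V_i$. Consequently every circuit step in $P_{G,\vecc}$ acts only on one factor. Given vertices $\veu^{(1,i)},\veu^{(2,i)}$ with $\dist_{\Circuits}(\veu^{(1,i)},\veu^{(2,i)})\geq d_i$ in each factor, form the combined vertices $\veu^{(1)},\veu^{(2)}\in P_{G,\vecc}$; any circuit walk between them restricts, block by block, to a circuit walk from $\veu^{(1,i)}$ to $\veu^{(2,i)}$ in $P_{G_i,\vecc^i}$, yielding a total length at least $\sum d_i$.

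The main obstacle is verifying that the projection of a circuit walk onto factor $i$ really is a valid circuit walk in $P_{G_i,\vecc^i}$. Feasibility of projected intermediate points is immediate from the product structure. The nontrivial point is the maximality requirement (iii) in the definition of a circuit walk: each step in $P_{G,\vecc}$ is blocked by some inequality $-u_a+u_b\leq c_{ab}$, and because the active circuit is supported in a single $G_i$, the blocking edge $ab$ must belong to that $E_i$, so the same step length is maximal inside $P_{G_i,\vecc^i}$. Once this is checked, the restriction has exactly the prescribed length, and summing the lower bounds in each factor delivers the claimed lower bound for the circuit diameter.
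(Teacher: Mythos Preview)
Your proposal is correct and follows essentially the same approach as the paper: the key observation in both is that, since $S$ must be connected and $v_0\in R$, any circuit (hence any edge) direction is supported in a single $V_i$, so walks in $P_{G,\vecc}$ decompose factor by factor. The paper states this in one line and then simply asserts that walks of length $d'$ ``translate'' into $k$ walks of total length $d'$ and vice versa; you spell out the Cartesian-product structure, the projection argument for the lower bound, and in particular the verification of the maximality condition~(iii), which the paper leaves implicit.
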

\begin{proof}
Let a circuit direction of $P_{G,\vecc}$ be given by a partition $V=R\cupdot S$. Assume w.l.o.g.{} $v_0\in R$. Then $S\subseteq V_i\backslash\left\{v_0^i\right\}$ for some $i\in \left\{1,\ldots,k\right\}$, as the node set $S$ must be connected in the underlying graph and $v_0\notin S$.

\begin{figure}[H]
\centering
\begin{tikzpicture}
\useasboundingbox (-2,-2) rectangle (2,2) ;
  \node (v0) at  (0,0) {$v_0$};
	\draw (v0) circle (0.4);
 	\node at (0.6,0.6) {$G_1$};
	\draw (0,0) .. controls (-0.4,3) and  (3,-0.4) ..  (0,0); 
	\node at (0.6,-0.6) {$G_2$};
	\draw (0,0) .. controls (0.5,-3) and  (3,-0.5) ..  (0,0); 
	\node at (-0.6,-0.6) {$G_3$};
	\draw (0,0) .. controls (-0.3,-4) and  (-3,-0.6) ..  (0,0); 
	\node at (-0.6,0.6) {$G_4$};
	\draw (0,0) .. controls (-0.5,3) and  (-3,0.5) ..  (0,0); 
	\draw[fill= white] (v0) circle  (0.4);
  \node (v0) at  (0,0) {$v_0$};
  \draw[rotate= 45, dotted] (0.5,0) ellipse  (1.5 and 2.3);
  \draw[rotate= 45, dotted] (-1.8,-0.25) ellipse  (0.7 and 1.2);
  \node at (1.7,1.7) {R};
  \node at (-2.5,-1) {S};
 \end{tikzpicture}
\end{figure}

Therefore, every step of an edge walk (circuit walk) modifies only variables corresponding to a single, particular component $G_i$, such that every edge walk (circuit walk) on $P_{G,\vecc}$ of length $d'$ directly translates into $k$ edge walks (circuit walks) on $P_{G_1,\vecc^1},\ldots,P_{G_k,\vecc^k}$ of length $d'_1,\ldots,d'_k$  with $\sum_{i=1}^k d'_i=d'$ and vice versa. \eoproof
\end{proof}

We now turn to an example which shows that there are configurations in which there is no circuit step that inserts an edge from the target tree. Note that in the undirected bipartite case we are always able to apply such a step.
Therefore, recall Example \ref{THEex} in which we introduced the polyhedron $P_{G,\vecc}$ defined on the following graph. The labels on the edges correspond to the values of $\vecc$.
\begin{figure}[H]
\centering
\begin{tikzpicture}
	\tikzset{vertex/.style = {shape=circle,draw,minimum size=3em}}
  \node[vertex] (v0) at  (0,3) {$v_0$};
  \node[vertex] (v1) at  (0,0) {$v_1$};
  \node[vertex] (v2) at  (3,0) {$v_2$};
  \node[vertex] (v3) at  (3,3) {$v_3$};
  
  \tikzset{edge/.style = {->,> = latex'}}
  \draw[edge, ->] (v3) [bend right = 10] to  node[above] {$0$} (v0);
  \draw[edge, ->] (v2) [bend left = 10] to (v0);
  \node at (0.5,2) {$0$};
  \draw[edge, ->] (v3) [bend left = 10] to (v1);
	\node at (1,0.5) {$0$};
  \draw[edge, ->] (v0) [bend left = 50] to [above] node {$2$} (v3);
  \draw[edge, ->] (v0) [bend left=10] to (v2);
  \node at (2.5,1) {$\frac{4}{3}$};
  \draw[edge, ->] (v1) [bend left=10] to (v3);
  \node at (2,2.5) {$\frac{4}{3}$};
  \draw[edge, ->] (v0) [bend right=10] to [left] node {$1$} (v1);
  \draw[edge, ->] (v1) [bend right=10] to [below] node {$1$} (v2);
  \draw[edge, ->] (v2) [bend right=10] to [right] node {$\frac{10}{9}$} (v3);
\end{tikzpicture}
\end{figure}
Observe that the polyhedron $P_{G,\vecc}$ is non-degenerate (there can be no cycle of tight inequalities).
The following two spanning trees correspond to vertices $\veu^{(1)}$ and $\veu^{(2)}$  of $P_{G,\vecc}$. The nodes are labeled by the values of the corresponding variables.
\begin{figure}[H]
\centering
\begin{tikzpicture}
	\tikzset{vertex/.style = {shape=circle,draw,minimum size=3em}}
  \node[vertex] (v0) at  (0,2) {$0$};
  \node[vertex] (v1) at  (0,0) {$0$};
  \node[vertex] (v2) at  (2,0) {$0$};
  \node[vertex] (v3) at  (2,2) {$0$};
  \node at (1,-1) {$T_1=G(\veu^{(1)})$};
  
  \node[vertex] (w0) at  (5,2) {$0$};
  \node[vertex] (w1) at  (5,0) {$\frac{2}{3}$};
  \node[vertex] (w2) at  (7,0) {$\frac{4}{3}$};
  \node[vertex] (w3) at  (7,2) {$2$};
  \node at (6,-1) {$T_2=G(\veu^{(2)})$};
  
  \tikzset{edge/.style = {->,> = latex'}}
  \draw[edge, ->] (v3)  to   (v0);
  \draw[edge, ->] (v2) to (v0);
  \draw[edge, ->] (v3) to (v1);
  \draw[edge, ->] (w0) to (w3);
  \draw[edge, ->] (w0) to (w2);
  \draw[edge, ->] (w1) to (w3);
  \end{tikzpicture}
\end{figure}
These two vertices are connected via the following edge walk of length $4$. Hence their circuit distance and combinatorial distance are at most $4$.
\begin{figure}[H]
\centering
\begin{tikzpicture}
	\tikzset{vertex/.style = {shape=circle,draw,minimum size=2em}}
  \tikzset{edge/.style = {->,> = latex'}}

  \node[vertex] (v00) at  (0,1) {$0$};
  \node[vertex] (v10) at  (0,0) {$0$};
  \node[vertex] (v20) at  (1,0) {$0$};
  \node[vertex] (v30) at  (1,1) {$0$};
	\draw[edge, ->] (v20) to (v00);
  \draw[edge, ->] (v30) to (v00);
  \draw[edge, ->] (v30) to (v10);
  
  \node[vertex] (v01) at  (2.5,1) {$0$};
  \node[vertex] (v11) at  (2.5,0) {$1$};
  \node[vertex] (v21) at  (3.5,0) {$0$};
  \node[vertex] (v31) at  (3.5,1) {$1$};
	\draw[edge, ->] (v01) to (v11);
  \draw[edge, ->] (v21) to (v01);
  \draw[edge, ->] (v31) to (v11);
  
  \node[vertex] (v02) at  (5,1) {$0$};
  \node[vertex] (v12) at  (5,0) {$1$};
  \node[vertex] (v22) at  (6,0) {\small$\frac{4}{3}$};
  \node[vertex] (v32) at  (6,1) {$1$};
	\draw[edge, ->] (v02) to (v12);
  \draw[edge, ->] (v02) to (v22);
  \draw[edge, ->] (v32) to (v12);

  \node[vertex] (v03) at  (7.5,1) {$0$};
  \node[vertex] (v13) at  (7.5,0) {$1$};
  \node[vertex] (v23) at  (8.5,0) {\small$\frac{4}{3}$};
  \node[vertex] (v33) at  (8.5,1) {$2$};
	\draw[edge, ->] (v03) to (v13);
  \draw[edge, ->] (v03) to (v23);
  \draw[edge, ->] (v03) to (v33);

  \node[vertex] (v04) at  (10,1) {$0$};
  \node[vertex] (v14) at  (10,0) {\small$\frac{2}{3}$};
  \node[vertex] (v24) at  (11,0) {\small$\frac{4}{3}$};
  \node[vertex] (v34) at  (11,1) {$2$};
	\draw[edge, ->] (v04) to (v34);
  \draw[edge, ->] (v04) to (v24);
  \draw[edge, ->] (v14) to (v34);
  
 	\node at (1.75,0.5) {$\longrightarrow$};
	\node at (4.25,0.5) {$\longrightarrow$};
	\node at (6.75,0.5) {$\longrightarrow$};
	\node at (9.25,0.5) {$\longrightarrow$};
\end{tikzpicture}
\end{figure}
We now illustrate all possible first circuit steps from $\veu^{(1)}$, leading to points $\vey^{(1)},\ldots,\vey^{(6)}$. The corresponding circuits are stated below the graphs and are w.l.o.g.{} given by subsets $S\subseteq V$ such that $v_0\notin S$ (note that $S=\{v_1,v_2\}$ is not applicable). Observe that in all cases the inserted (bold) edge is not in $E(T_2)$.
\begin{figure}[H]
\centering
\begin{tabular} {ccccc} 
\\
$\vey^{(1)}$ && $\vey^{(2)}$ && $\vey^{(3)}$\\ 
\begin{tikzpicture}
	\tikzset{vertex/.style = {shape=circle,draw,minimum size=2em}}
  \node[vertex] (v0) at  (0,1) {$0$};
  \node[vertex] (v1) at  (0,0) {\scriptsize$-1$};
  \node[vertex] (v2) at  (1,0) {$0$};
  \node[vertex] (v3) at  (1,1) {$0$};
  \tikzset{edge/.style = {->,> = latex'}}
  \draw[edge, ->] (v3) to (v0);
  \draw[edge, ->] (v2) to (v0);
  \draw[ ->, line width= 2] (v1) to (v2);
\end{tikzpicture}
& \hspace{0.2cm} &
\begin{tikzpicture}
	\tikzset{vertex/.style = {shape=circle,draw,minimum size=2em}}
  \node[vertex] (v0) at  (0,1) {$0$};
  \node[vertex] (v1) at  (0,0) {$0$};
  \node[vertex] (v2) at  (1,0) {$1$};
  \node[vertex] (v3) at  (1,1) {$0$};
  \tikzset{edge/.style = {->,> = latex'}}
  \draw[edge, ->] (v3) to (v0);
  \draw[->, line width=2] (v1) to (v2);
  \draw[edge, ->] (v3) to (v1);
\end{tikzpicture}
& \hspace{0.2cm} &
\begin{tikzpicture}
	\tikzset{vertex/.style = {shape=circle,draw,minimum size=2em}}
  \node[vertex] (v0) at  (0,1) {$0$};
  \node[vertex] (v1) at  (0,0) {$0$};
  \node[vertex] (v2) at  (1,0) {$0$};
  \node[vertex] (v3) at  (1,1) {\tiny$\frac{10}{9}$};
  \tikzset{edge/.style = {->,> = latex'}}
  \draw[ ->, line width= 2] (v2) to (v3);
  \draw[edge, ->] (v2) to (v0);
\end{tikzpicture}\\
$\{v_1\}$ && $\{v_2\}$ && $\{v_3\}$\\
$(0,\frac{5}{3},\frac{4}{3},2)$ &&
$(0,\frac{2}{3},\frac{1}{3},2)$ &&
$(0,\frac{2}{3},\frac{4}{3},\frac{8}{9})$\\
\\
$\vey^{(4)}$ && $\vey^{(5)}$ && $\vey^{(6)}$\\ 
\begin{tikzpicture}
	\tikzset{vertex/.style = {shape=circle,draw,minimum size=2em}}
  \node[vertex] (v0) at  (0,1) {$0$};
  \node[vertex] (v1) at  (0,0) {$1$};
  \node[vertex] (v2) at  (1,0) {$0$};
  \node[vertex] (v3) at  (1,1) {$1$};
  \tikzset{edge/.style = {->,> = latex'}}
	\draw[ ->, line width= 2] (v0) to (v1);
  \draw[edge, ->] (v2) to (v0);
  \draw[edge, ->] (v3) to (v1);
\end{tikzpicture}
& \hspace{0.2cm} &
\begin{tikzpicture}
	\tikzset{vertex/.style = {shape=circle,draw,minimum size=2em}}
  \node[vertex] (v0) at  (0,1) {$0$};
  \node[vertex] (v1) at  (0,0) {$0$};
  \node[vertex] (v2) at  (1,0) {$1$};
  \node[vertex] (v3) at  (1,1) {$1$};
  \tikzset{edge/.style = {->,> = latex'}}
  \draw[ ->, line width= 2] (v1) to (v2);
\end{tikzpicture}
& \hspace{0.2cm} &
\begin{tikzpicture}
	\tikzset{vertex/.style = {shape=circle,draw,minimum size=2em}}
  \node[vertex] (v0) at  (0,1) {$0$};
  \node[vertex] (v1) at  (0,0) {$1$};
  \node[vertex] (v2) at  (1,0) {$1$};
  \node[vertex] (v3) at  (1,1) {$1$};
  \tikzset{edge/.style = {->,> = latex'}}
	\draw[->, line width= 2] (v0) to (v1);
  \draw[edge, ->] (v3) to (v1);  
\end{tikzpicture}
\\
$\{v_1,v_3\}$ && $\{v_2,v_3\}$ && $\{v_1,v_2,v_3\}$ 
\\
$(0,-\frac{1}{3},\frac{4}{3},1)$ &&
$(0,\frac{2}{3},\frac{1}{3},1)$ &&
$(0,-\frac{1}{3},\frac{1}{3},1)$ 
\\
\end{tabular}
\end{figure}
\medskip

It then is elementary to verify that the circuit distance from $\veu^{(1)}$ to $\veu^{(2)}$ is indeed $|V|=4$. For this purpose, it is sufficient to see that one was not able to insert an edge from $E(T_2)$ in the first circuit step, and that in the remaining circuit walk we cannot insert two edges from $E(T_2)$ at the same time. Even if the latter property would not hold for a given $\vecc$, we could always satisfy it by a slight perturbation:

For every single step of a circuit walk, a finite number of linear conditions on the right-hand sides $\vecc$ guarantees that only at most one edge from a target tree is inserted. Thus, after $k$ steps on a circuit walk, we only have to exclude the $\vecc$ in the union of a countable number of hyperplanes to be able to guarantee this property for all steps of circuit walks of length at most $k$.

So we now have a graph $G$ with circuit distance (at least) $|V|=4$. Applying Lemma \ref{lemmaGlueing} to $k$ copies $G_i$ of $G$ yields a new graph $G^k$ on $3k+1$ nodes with (combinatorial and circuit) diameter at least $4k$. This gives us a family of graphs $G^k$ for which both diameters exceed the number of nodes by an arbitrary constant $k-1$ and the ratio between the diameters and the number of nodes approaches $\frac{4}{3}$ for $k\to \infty$. In particular we get the following lower bound statement for the circuit diameter (and hence also for the combinatorial diameter) of dual network flow polyhedra.
\begin{lemma}
For any $n\geq 4$, there is a graph $G=(V,E)$ on $|V|=n$ nodes and a vector $\vecc \in \R^{|E|}$ such that 
	\[ \diam_\Circuits\left(P_{G,\vecc}\right) \geq \frac{4}{3} |V|-4 \ .\]
\end{lemma}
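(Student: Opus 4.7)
The plan is to combine Example \ref{THEex} with the glueing construction of Lemma \ref{lemmaGlueing}. The key observation is that the four-node example already achieves circuit diameter at least $|V|=4$, which is one more than the bipartite bound of $|V|-2$. Glueing $k$ copies then yields a graph on $3k+1$ nodes whose associated polyhedron has circuit diameter at least $4k$, and a short padding argument extends the statement to every $n \geq 4$.

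The crux is to verify that $\dist_\Circuits(\veu^{(1)}, \veu^{(2)}) \geq 4$ in the four-node example. First I would record that $P_{G,\vecc}$ is non-degenerate (no directed cycle of tight inequalities can arise from the prescribed costs) and that $\veu^{(2)}$ is the unique point of $P_{G,\vecc}$ whose graph contains all of $E(T_2)$. For the first circuit step from $\veu^{(1)}$, the admissible circuit directions correspond, by the sign symmetry with $0 \in R$, to nonempty proper connected subsets $S \subseteq \{v_1,v_2,v_3\}$ that yield a positive step size; these are precisely the six possibilities $\vey^{(1)},\ldots,\vey^{(6)}$ tabulated above, and in each case the unique newly tight arc is not an edge of $E(T_2)$. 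Since none of the three edges of $T_2$ is tight at $\veu^{(1)}$, still none is tight after the first step.

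Next I would argue that in any later step at most one edge of $E(T_2)$ can become tight simultaneously. For every combination of a feasible point reachable by a circuit walk, a circuit direction and a pair of edges in $E(T_2)$, simultaneous tightening of those two inequalities imposes a single linear equation on $\vecc$; collecting these over the finitely many reachable combinations up to any fixed length bound yields a finite union of hyperplanes in $\vecc$-space, so an arbitrarily small generic perturbation of $\vecc$ forbids simultaneous tightening while preserving every strict inequality relied upon in the first-step analysis. Combining the two facts, at least three additional steps are required beyond the first to make all three edges of $E(T_2)$ tight, giving $\dist_\Circuits(\veu^{(1)}, \veu^{(2)}) \geq 4$ and hence $\diam_\Circuits(P_{G,\vecc}) \geq 4 = |V|$.

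With this building block in hand, Lemma \ref{lemmaGlueing} applied to $k$ glued copies yields a graph on $3k+1$ nodes whose associated polyhedron has circuit diameter at least $4k$. For arbitrary $n \geq 4$, choose the minimal $k$ with $3k + 3 \geq n$, which forces $4k \geq \tfrac{4}{3}n - 4$. Starting from the glued graph on $3k+1$ nodes, glue on up to two additional two-node graphs, each with a single directed arc, whose associated polyhedron is a ray with a single vertex and thus contributes $0$ to the sum of circuit diameters; this reaches exactly $n$ nodes while keeping the combined circuit diameter at least $4k$. The resulting polyhedron has circuit diameter at least $4k \geq \tfrac{4}{3}n - 4$, which is the claimed bound. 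The principal obstacle is the lower-bound verification of the four-node example, in particular the combination of an exhaustive first-step analysis with the generic perturbation ruling out simultaneous insertions of $T_2$-edges; once that is in place, the glueing step and the padding argument are straightforward bookkeeping.
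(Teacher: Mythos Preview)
Your proposal is correct and follows essentially the same approach as the paper: verify that the four-node example has circuit distance at least $4$ via the exhaustive first-step enumeration plus the ``no two $T_2$-edges simultaneously'' observation (with the same perturbation safeguard the paper invokes), apply Lemma~\ref{lemmaGlueing} to $k$ copies to get diameter $\geq 4k$ on $3k+1$ nodes, and pad with up to two pendant pieces to reach arbitrary $n$. Your padding via glued two-node graphs is exactly the paper's ``add one or two leaves,'' just phrased through Lemma~\ref{lemmaGlueing}.
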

\begin{proof}
For $n=3k+1$ with $k\in \Z$ the claim follows by choosing $G=G^k$, as $k=\frac{|V|-1}{3}$ and the circuit diameter is at least $4k$. If $n=3k+2$  ($n=3k+3$) we simply add one leaf (two leaves) to $G^k$. Then $k=\frac{|V|-2}{3}$ ($k=\frac{|V|-3}{3}$) and the circuit diameter is again at least $4k$.
\eoproof
\end{proof}

\bigskip

\section*{Acknowledgments} 

The second author gratefully acknowledges the support from the graduate program TopMath of the Elite Network of Bavaria and the TopMath Graduate Center of TUM Graduate School at Technische Universit\"at M\"unchen.

\bibliographystyle{plain}
\bibliography{biblioAugmentation}

\end{document}